 \newcommand {\N} {{\mathbb N}}
 \newcommand {\C} {{\mathbb C}}
 \newcommand {\R} {{\mathbb R}}
 \newcommand {\Z} {{\mathbb Z}}
 \newcommand {\Q} {{\mathbb Q}}
 \newcommand {\calH} {{\mathcal H}}
 \newcommand {\F} {{\mathcal F}}
 \newcommand {\dt} {{\bullet}}
\newcommand {\calK} {{\mathcal K}}
 \newcommand {\G} {{\mathcal G}}
 \newcommand {\Gm} {\mathbb{G}_m}
\newcommand {\LL} {{\mathbb L}}
\newcommand {\A} {{\mathcal N}}
\newcommand {\M} {{\mathcal M}^{\text{eff}}}
\newcommand {\cM} {\mathcal {M}}
\newcommand {\cH} {\mathcal{H}}
\newcommand {\HH} {\mathbb{H}}
\newcommand {\LLe}{\mathbb{L}^{\text{eff}}}
\DeclareMathOperator{\Spec}{Spec}
\DeclareMathOperator{\Freyd}{Freyd}
\DeclareMathOperator{\Cons}{Cons}
\DeclareMathOperator{\Ob}{Ob}
\DeclareMathOperator{\MHM}{MHM}
\DeclareMathOperator{\VMHS}{VMHS}
\DeclareMathOperator{\CMHM}{CMHM}
\newtheorem{thm}{Theorem}[section]
\newtheorem{prop}[thm]{Proposition}
\newtheorem{lemma}[thm]{Lemma}
\newtheorem{cor}[thm]{Corollary}
\newtheorem{rmk}[thm]{Remark}
\numberwithin{equation}{section}
\begin{document}
\title {Motivic sheaves revisited}

\author{Donu Arapura}
\thanks{Author partially supported by a grant from the Simons foundation.}
\address{Department of Mathematics\\
   Purdue University\\
   West Lafayette IN 47907\\
   U.S.A.}

 \maketitle

\begin{abstract}
 The purpose of this paper is to present a simplified construction of the author's category of motivic sheaves 
 \cite{arapura}, and to provide a simplified proof 
 of a theorem of \cite{arapuraL} that the Leray spectral sequence can
 be lifted to this category.
\end{abstract}

Let us recall that given a subfield $k\subset \C$, Nori defined an abelian category of mixed motives $\cM(k)$,
 which received a universal cohomology theory for pairs of $k$-varieties. The book by Huber 
 and M\"uller-Stach  \cite{hm} now gives a fairly  detailed account of this story.
In \cite{arapura}, we generalized Nori's construction to obtain an abelian category $\cM(S)$,
of motivic ``sheaves'' over a $k$-variety $S$. There is a faithful
exact ``Betti'' realization functor $R_B$ from $\cM(S)$ to the category of
constructible sheaves on the analytic space $S_{an}$, and also an
exact functor $R_\ell$ from $\cM(S)$ to the category of constructible
$\ell$-adic sheaves. There is also a Hodge realization from $\cM(S)$ to the heart of a
certain $t$-structure on the derived category of mixed Hodge modules.
This is made explicit when $S$ is a curve in the fourth  section of this paper.
For each projective morphism $f:X\to S$, there exists a motive
$h_S^i(X)\in \cM(S)$ such that $R_B(h^i_S(X))=R^if_*\Z$ and  $R_\ell(h^i_S(X))=R^if_*\Z_\ell$.

The main result of this paper is that there exists a $\delta$-functor
  $h^*:\cM(S)\to \cM(k)$, such that 
$R_B(h^j(M))= H^j(S, R_B(M))$.
Furthermore, if  $f:X\to S$ is projective, there is a spectral sequence
converging to the Nori motive 
$${}_ME_2^{pq} = h^p(h_S^q(X))\Rightarrow h^{p+q}(X)$$
whose  image under $R_B$ is  the Leray spectral sequence.
This is an amalgam of the main theorem of \cite{arapuraL} and  theorem 5.2.1 of \cite{arapura}.
The proof here is simpler than either of the previous proofs. One of the goals of this paper is 
to give more transparent constructions and proofs of some  results from the papers
\cite{arapuraL, arapura}. This is possible, in part, because of some
developments over the intervening years. Nori's Tannakian construction has been
refined by various authors (\cite{blo}, \cite{bp}, 
\cite{hm} and \cite{ivorra}). In particular, using the  set up by Barbieri Viale and
Prest \cite{bp}, we are able to give the more  direct and   natural construction of $\cM(S)$
used here. In addition, some of the complicated homological algebra from
\cite{arapuraL} can be replaced by a technical result due to  de Cataldo and
Migliorini \cite{cm}, which gives a criterion for a map to be an isomorphism in the  filtered derived category.

My thanks to T. Abe and D. Patel for
various comments,  by email or in person, that provided some of the
impetus for writing this follow up.

\section{The N${}^+$ construction}

We will use the term quiver instead of (directed) graph used in
 \cite{arapura}. We will frequently apply category theoretic terminology to quivers.
 In particular, the words ``vertex'' and ``object''
 (respectively ``edge'' and ``morphism'') are used interchangeably.
 The set of objects of $\Delta$ is denoted by $\Ob \Delta$.
 A subquiver $\Delta'\subseteq \Delta$ is full if for any edge  in $\Delta$ joining $e,e'\in \Ob \Delta'$,
is in $\Delta'$.
A morphism, functor, or
 representation between quivers $F:\Delta\to \Delta'$ is a pair of
 functions between vertices and edges which preserves incidence: the
 source/target of $F(e)$ is $F$ applied to the source/target of $e$.
 We say that a diagram of categories is
$2$-commutative, if any two paths between the same vertices are naturally isomorphic.
We will recall the following generalization of Nori's Tannakian construction due
to Barbieri Viale and Prest \cite[pp 207, 214, 215]{bp}, that we will  refer to as the
N${}^+$ construction.

\begin{thm}\label{thm:bp}
  Let $R$ be a commutative ring.
  Given a representation from a quiver to an $R$-linear abelian category
  $F:\Delta\to A$, there exists an $R$-linear abelian category $\A_R(F)$
  and a $2$-commutative diagram
$$
\xymatrix{
 \Delta\ar[r]\ar[rd]^{F} & \A_R(F)\ar[d]^{\phi} \\ 
  & A
}
$$
with $\phi$ $R$-linear faithful and exact. Furthermore, this is universal in the
sense that given any
  other such factorization  $\Delta\to B\to A$, we have a dotted arrow
  as drawn, which is unique up to natural isomorphism, and which  makes the whole diagram 
  $2$-commutative
  $$  
\xymatrix{
 \Delta\ar[r]\ar[d]\ar[rd]^{F} & \A_R(F)\ar[d]\ar@{-->}[ld] \\ 
 {B}\ar[r] & A
}
$$
  
\end{thm}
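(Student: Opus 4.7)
The plan is to build $\A_R(F)$ in three successive universal steps and then derive the universal property by tracing them through. First, form the free $R$-linear additive category $R\Delta$ on the quiver $\Delta$: objects are finite formal direct sums of vertices and morphisms are matrices of $R$-linear combinations of directed paths, so that $F$ extends essentially uniquely to an $R$-linear additive functor $R\Delta \to A$. Second, apply Freyd's abelian envelope construction to obtain an $R$-linear abelian category $\Freyd(R\Delta)$ together with an additive functor $R\Delta \to \Freyd(R\Delta)$ which is universal among $R$-linear additive functors into abelian categories; by this universality the previous functor extends to an essentially unique exact $R$-linear functor $\tilde F : \Freyd(R\Delta) \to A$. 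Third, let $\K \subseteq \Freyd(R\Delta)$ be the full subcategory of objects sent to $0$ by $\tilde F$; exactness makes $\K$ a Serre subcategory, and I set $\A_R(F) := \Freyd(R\Delta)/\K$. The induced $\phi : \A_R(F) \to A$ is exact by construction, and it reflects zero objects (if $\phi(x) = 0$ then $x \in \K$, hence is zero in the quotient); an exact functor of abelian categories that reflects the zero object is automatically faithful, so $\phi$ is faithful as required.

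For the universal property, let $\Delta \to B \to A$ be another factorization with $B$ an $R$-linear abelian category and $B \to A$ faithful exact. Running the first two steps with $A$ replaced by $B$, the morphism $\Delta \to B$ extends to an exact $R$-linear functor $\Psi : \Freyd(R\Delta) \to B$, whose composition with $B \to A$ is naturally isomorphic to $\tilde F$ by essential uniqueness. Hence for any $x \in \K$ the image of $\Psi(x)$ in $A$ is zero; faithfulness of $B \to A$ (which, being exact, therefore reflects zero objects) forces $\Psi(x) = 0$. Thus $\Psi$ annihilates $\K$ and descends to the required essentially unique exact $R$-linear functor $\A_R(F) \to B$, producing the dotted arrow making the enlarged diagram $2$-commute.

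I expect the main obstacle to be careful bookkeeping of essential uniqueness across the three universal constructions, i.e.\ aligning the natural isomorphisms that witness $2$-commutativity when extensions are composed and then factored through a Serre quotient. The substantive mathematical inputs are, however, modest: Freyd's classical abelian envelope of an additive category, the fact that a Serre quotient of an abelian category is again abelian with an exact quotient functor, and the elementary lemma that an exact functor of abelian categories is faithful if and only if it reflects zero objects. These suffice for both the construction and its universal property, and they are essentially the ingredients repackaged by Barbieri Viale and Prest.
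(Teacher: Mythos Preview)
Your proposal is correct and follows essentially the same route as the paper's summary of the Barbieri Viale--Prest construction: free $R$-linearization $R\Delta$, Freyd's free abelian category $\Freyd_R(\Delta)$, extension of $F$ to an exact $\tilde F$, and Serre quotient by $\ker\tilde F$. The only cosmetic differences are that the paper keeps $R\Delta$ preadditive (same objects as $\Delta$, hom-sets the free $R$-modules on paths) rather than formally adjoining direct sums, and that it writes out Freyd's envelope explicitly as $[[R\Delta, R\text{-Mod}]^{fp}, R\text{-Mod}]^{fp}$ where you invoke the universal property abstractly; neither change affects the argument.
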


 It will be useful to briefly summarize the construction, since it will lead to somewhat more refined statements.
 One forms a preadditive
category $R \Delta$ with the same objects as $\Delta$, and for morphisms take 
 the free $R$-module  generated by paths. (In the case where $R$ is not explicitly mentioned, we take $R=\Z$.)
Given an $R$-linear preadditive category $C$, let $[C, R\text{-Mod}]$ denote the
category of $R$-linear additive functors from $C$ to the category of
$R$-modules, and let $[C, \text{R-Mod}]^{fp}$ be  the full subcategory of finitely
presented objects \cite[p 212]{bp}.
Define
$$\Freyd_R(\Delta)=[[R \Delta, \text{R-Mod}]^{fp}, \text{R-Mod} ]^{fp}$$
This is an $R$-linear abelian category. Furthermore, there is a
canonical representation $\Delta\to \Freyd_R(\Delta)$, and $F$ has a
canonical exact extension  $\tilde F:\Freyd_R(\Delta) \to A$.
Then $\A_R(F)$, or $\A(F)$ when $R$ is understood, is the Serre quotient $\Freyd_R(
\Delta)/\ker \tilde F$, where $\ker \tilde F\subset \Freyd_R(
\Delta)$ is the full subcategory with
objects $\{X\mid \tilde F(X)=0\}$.

It should now be clear that the N${}^+$ construction
satisfies the following:

\begin{lemma}
    Let $g:\Delta\to \Delta'$ be a morphism of quivers,
\begin{enumerate}
\item 
There is a
  $2$-commutative diagram
$$
\xymatrix{
 \Delta\ar[r]^{g}\ar[d] & \Delta'\ar[d] \\ 
 \Freyd_R(\Delta)\ar[r]^{G} & \Freyd_R(\Delta')
}
  $$
with $G$ exact. 

\item If  there are representations $F:\Delta\to A$
and $F':\Delta'\to A'$ to $R$-linear abelian categories 
such that $G$ sends objects of $\ker \tilde F$ to $\ker\tilde F'$, then we get an
induced exact functor $\A_R(F)\to \A_R(F')$ such that
$$
\xymatrix{
 \Delta\ar[r]^{g}\ar[d] & \Delta'\ar[d] \\ 
 \A_R(F)\ar[r]& \A_R(F')
}
$$
commutes.
\item If  there are representations $F:\Delta\to A$
and $F':\Delta\to A'$
such that $\ker \tilde F\subseteq \ker \tilde F'$, then $\A_R(F')$ is a Serre quotient of
$\A_R(F)$.
\end{enumerate}
\end{lemma}

\begin{cor}\label{cor:bp}
  Suppose that $F:\Delta\to A$ and $F':\Delta'\to A'$ are two representations to
  abelian categories, that fit into a $2$-commutative diagram
  $$
\xymatrix{
 \Delta\ar[r]^{g}\ar[d] & \Delta'\ar[d] \\ 
 A\ar[r]^{G} & A'
}
  $$
with $G$ exact. Then there is an exact functor $\A(F)\to \A(F')$  fitting into the obvious diagram.

\end{cor}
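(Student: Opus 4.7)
The plan is to reduce the corollary to the preceding lemma by producing the functor on Freyd envelopes and verifying the kernel condition required by the second half of that lemma. Applying the first statement of the lemma to $g:\Delta\to\Delta'$ yields an exact $R$-linear functor $\tilde g:\Freyd_R(\Delta)\to\Freyd_R(\Delta')$ fitting into a $2$-commutative square with the canonical representations. What remains is to check that $\tilde g$ carries $\ker\tilde F$ into $\ker\tilde F'$.

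The key observation is that both $G\circ \tilde F$ and $\tilde F'\circ \tilde g$ are exact $R$-linear functors $\Freyd_R(\Delta)\to A'$. Chasing the canonical representations $\Delta\to\Freyd_R(\Delta)$ and $\Delta'\to\Freyd_R(\Delta')$, one finds that the restriction of $G\circ \tilde F$ to $\Delta$ is the representation $G\circ F$, while the restriction of $\tilde F'\circ \tilde g$ to $\Delta$ is $F'\circ g$. These two representations $\Delta\to A'$ are naturally isomorphic by the hypothesis that the original square $2$-commutes. By the universal property of the Freyd envelope (any additive functor $R\Delta\to A'$ to an abelian category extends essentially uniquely to an exact functor on $\Freyd_R(\Delta)$), it follows that
$$G\circ \tilde F\;\cong\;\tilde F'\circ \tilde g$$
as functors on $\Freyd_R(\Delta)$. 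In particular, if $\tilde F(X)=0$, then $\tilde F'(\tilde g(X))\cong G(\tilde F(X))=0$, so $\tilde g$ sends $\ker\tilde F$ into $\ker\tilde F'$.

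Now the second half of the lemma applies, producing the induced exact functor $\A(F)\to\A(F')$. The $2$-commutativity of the various squares involving $\Delta,\Delta',A,A'$ and $\A(F),\A(F')$ is inherited from the corresponding $2$-commutativity on the level of Freyd envelopes together with the faithful exact realizations $\phi:\A(F)\to A$ and $\phi':\A(F')\to A'$ provided by Theorem~\ref{thm:bp}.

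The only non-formal step is the comparison $G\circ \tilde F\cong \tilde F'\circ \tilde g$, which I expect to be the main obstacle; however it reduces to the universal property that defines the Freyd envelope (the extension of a representation of $R\Delta$ to an exact functor out of $\Freyd_R(\Delta)$ is essentially unique), so no new argument beyond the Barbieri Viale--Prest construction is required.
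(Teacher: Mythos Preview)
Your argument is correct and is exactly the unwinding the paper intends: the corollary is stated without proof immediately after the lemma, and your reduction---produce $\tilde g$ on Freyd envelopes, use the essential uniqueness of exact extensions out of $\Freyd_R(\Delta)$ to identify $G\circ\tilde F\cong \tilde F'\circ\tilde g$, and conclude $\tilde g(\ker\tilde F)\subseteq\ker\tilde F'$---is precisely how one deduces the corollary from the lemma's second clause.
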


The following will also be needed later.

\begin{lemma}\label{lemma:dlim}
  Suppose that $\Delta = \bigcup \Delta_i$ is a directed union of
  quivers. If
  $F:\Delta\to A$ is a representation into an $R$-linear abelian
  category, then $\A_R(F)$ is equivalent to the filtered $2$-colimit
  $$\text{2-}\varinjlim_{i} \A_R(F|_{\Delta_i})$$
\end{lemma}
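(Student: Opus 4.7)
The plan is to apply the universal property of the N${}^+$ construction in both directions and then identify the result with a concrete subcategory of $A$. For each pair $i\leq j$, the inclusion $\Delta_i\hookrightarrow \Delta_j$ together with the identity on $A$ fits the hypotheses of Corollary \ref{cor:bp}, yielding a 2-functorial family of exact transition functors $\A_R(F|_{\Delta_i})\to \A_R(F|_{\Delta_j})$, as well as exact functors $\A_R(F|_{\Delta_i})\to\A_R(F)$, all commuting up to natural isomorphism with the faithful exact projections to $A$. This data induces an exact $R$-linear comparison functor $\mathcal{L}:=\text{2-}\varinjlim_i\A_R(F|_{\Delta_i})\to\A_R(F)$.

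In the other direction, the first substantive step is to verify that $\mathcal{L}$ is itself $R$-linear abelian and that the induced functor $\psi:\mathcal{L}\to A$ is faithful and exact. Finite (co)limits in $\mathcal{L}$ may be computed at a stage large enough to contain all objects and morphisms in question, and independence of the stage is guaranteed by exactness of the transition functors; faithfulness follows because any morphism in $\mathcal{L}$ is represented by some $\tilde{f}$ at a stage $i$, and $\psi$-vanishing forces $\phi_i(\tilde{f})=0$, hence $\tilde{f}=0$. The representation $\Delta\to\mathcal{L}$ obtained by sending a vertex or edge to its image at any stage containing it is canonically defined, so Theorem \ref{thm:bp} applied to the factorization $\Delta\to\mathcal{L}\to A$ produces an essentially unique exact functor $\A_R(F)\to\mathcal{L}$.

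Both composites agree on $\Delta$ with the canonical representations and commute up to natural isomorphism with the faithful exact functors to $A$. The essential uniqueness clause of Theorem \ref{thm:bp} (applied to $\A_R(F)$, and applied stage-by-stage for the factorizations $\Delta_i\to\A_R(F|_{\Delta_i})\to A$ that determine $\mathcal{L}$) forces both composites to be naturally isomorphic to the identities, giving $\mathcal{L}\simeq\A_R(F)$. For the concrete description, faithful exactness of $\phi$ identifies $\A_R(F)$ with the (not necessarily full) subcategory of $A$ whose objects and morphisms are the essential images of those of $\A_R(F)$, and likewise for each index. The directed union $\bigcup_i\phi(\A_R(F|_{\Delta_i}))$ is itself abelian inside $A$ because any finite diagram within it is contained in some single $\phi(\A_R(F|_{\Delta_i}))$ by directedness, which then supplies its kernel and cokernel. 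The previous equivalence identifies this union with $\phi(\A_R(F))$.

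The main obstacle is the folklore but somewhat delicate verification that $\mathcal{L}$ is $R$-linear abelian with $\psi$ faithful and exact, since objects of $\mathcal{L}$ are only defined up to moving further along the diagram and 2-commutativities must be tracked consistently. Once this setup is granted, the rest of the argument is formal and follows directly from the universal property of Theorem \ref{thm:bp}.
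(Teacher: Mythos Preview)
Your argument is correct and follows essentially the same route as the paper's sketch: construct comparison functors in both directions using the universal property of Theorem~\ref{thm:bp}, then verify they are mutually inverse via essential uniqueness. You supply more detail than the paper does---in particular, the verification that the $2$-colimit is abelian with a faithful exact functor to $A$, which the paper leaves implicit---but the strategy is the same.
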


(For the general construction of a filtered $2$-colimit of categories, see  \cite[exp VI
\S 6]{sga4}. In the case, at hand, where $C_i\subseteq C$ is directed
system of subcategories of a fixed category, we can identify
$\text{2-}\varinjlim_{i}  C_i$ with the  union $\bigcup_i
C_i\subseteq C$, i.e. the category whose objects and morphisms
are $\bigcup_i \Ob C_i$ and $\bigcup_i \text{Mor}\, C_i$ respectively.)

\begin{proof}[Sketch]
The family of functors  $\A_R(F|_{\Delta_i})\to \A_R(F)$ induces a  functor
$$\alpha:\text{2-}\varinjlim_{i} \A_R(F|_{\Delta_i})\to \A_R(F)$$
The representations
$$F|_{\Delta_i}: \Delta_i \to \text{2-}\varinjlim_{i} \A_R(F|_{\Delta_i})$$
patch to yield a representation of $\Delta$. Hence, by theorem~\ref{thm:bp},
we get
$$\beta:\A_R(F)\to \text{2-}\varinjlim_{i} \A_R(F|_{\Delta_i})$$
 One checks $\alpha$ and $\beta$ are inverse up to natural equivalence. 
\end{proof}

\section{Effective motivic sheaves}

For the remainder of  the paper, we fix a subfield $k\subset \C$ and a commutative noetherian ring $R$.
By a $k$-variety, we mean  a reduced separated scheme of finite type
over $\Spec k$.  The symbols $S,X,Y$ should be assumed to be $k$-varieties, unless stated otherwise.
Sheaves, and sheaf theoretic operations, should be understood to be with respect to 
the analytic topology $X_{an}=(X\times_{\Spec k}\Spec \C)_{an}$,
unless we explicitly  indicate the  \'etale topology by the decoration $et$.
If $f:X\to S$ is a morphism of $k$-varieties and $Y\subset X$ is a closed
subvariety, then the cohomology of the
pair $(X,Y)$ relative to $S$ with coefficients in a sheaf
$\F$ on   $X$   will be defined by
$$H_S^i(X, Y; \F)=R^if_*j_{X,Y!} \F|_{X-Y}$$
and
$$\mathbb{H}_S(X,Y; \F)= \R f_*j_{X,Y!} \F|_{X-Y},$$
where $j_{X,Y}: X-Y\to X$ is the inclusion.  Note that $H_S^i$ is not cohomology with support in 
$S$. When $S$ is the point $\Spec k$ and $\F$ is constant,
this agrees with what one usually means by cohomology of the pair.
Let us say that a pair $(X\to S, Y)$ has the {\em base change
  property} if  for any morphism $g:S'\to S$ of $k$-varieties, the canonical map gives an isomorphism
$$g^*H_S^i(X, Y;R) \cong H^i_{S'}(X_{S'}, Y_{S'}; R)$$
for all $i$, where $X_{S'} = (X\times_S S')_{red}$ etc.
This property can certainly fail, e.g. for $(\Gm\hookrightarrow
\mathbb{A}^1, \emptyset)$. We give some criteria  for the
property to hold. 

\begin{lemma}\label{lemma:basech}
If $f$ is proper, then  $(f:X\to S,Y)$  has the base change
property. More generally, if
    $(f:X\to S,Y)$ has the base change property and $g:S \to T $ is
  proper, then $(g\circ f:X\to T, Y)$ has the base change property.
\end{lemma}

\begin{proof}
The first statement follows immediately from the proper base change
theorem \cite[thm 2.3.26]{dimca} or \cite[prop 2.6.7]{ks}.
Let us  prove the second.
  Consider the diagram
  $$
  \xymatrix{
 X'\ar[d]^{f'}\ar[r] & X\ar[d]^{f} \\ 
 S'\ar[d]^{g'}\ar[r]^{\pi} & S\ar[d]^{g} \\ 
 T'\ar[r]^{p} & T
}
$$
where both squares are Cartesian. Also let $Y'\subset X'$ denote the
pullback of $Y$. Then by the proper base change theorem  together with
the hypothesis we have
\begin{equation*}
  \begin{split}
    p^*\HH_T(X,Y;R) & = p^*\R g_* \HH_S(X,Y;R)\\
    & = \R g_*'\pi^*\HH_S(X,Y;R)\\
    & = \R g_*' \HH_{S'}(X',Y';R)\\
    & = \HH_{T'}(X',Y';R)
  \end{split}
\end{equation*}
(Equality means that the canonical maps are isomorphisms.)

\end{proof}

Let us say that   $(f:X\to S, Y)$ is {\em controlled}
if there is a factorization  of $f$
$$X\stackrel{f_1}{\to} X'\stackrel{f_2}{\to} S$$
such that
$f_2$ is proper, and $(f_1:X_{an}\to X_{an}', Y_{an})$ is a
topological fibre bundle. To be more explicit, the last condition means that there exists a
topological space $F$, a closed subspace $G\subseteq F$, an open covering $\{U_i\}$ of $X_{an}'$, and
homeomorphisms of pairs
$$(f_1^{-1}U_i, f_1^{-1}U_i\cap Y)\cong (U_i\times F, U_i\times
G)$$
compatible with projection. The notion of being controlled is essentially the same as the
one defined in  \cite[3.2.1]{arapura}.

\begin{lemma}
  If $(f:X\to S, Y)$ is controlled, then it satisfies the base change property.
\end{lemma}

\begin{proof}
  Clearly, if  $(f:X_{an}\to S_{an}, Y_{an})$ is
topological fibre bundle, then it satisfies the base change
property. The general case of the lemma follows from this special case
and lemma~\ref{lemma:basech}.
\end{proof}

\begin{lemma}\label{lemma:prodbasechange}
  Suppose that  $(X_i\to S, Y_i)$  satisfy the base change
  property for $i=1,2$, and suppose that $H_S^*(X_1,Y_1; R)$ are flat
  modules. Then the fibre product
  $$(X_1\times_S X_2\to S, Y_1\times_S X_2\cup X_1 \times_S Y_2)$$
  has the base change property.
\end{lemma}

\begin{proof}
  Let $(X_3, Y_3)$ denote the above fibre product.
    Given a morphism $g:S'\to S$,  the K\"unneth formula gives
  isomorphisms
  \begin{equation*}
    \begin{split}
      g^* H_S^i(X_3,Y_3;R) &= \bigoplus_{j+\ell=i}
      g^*H_S^j(X_1,Y_1;R)\otimes g^*H_S^\ell (X_2,Y_2;R)\\
      &= \bigoplus_{j+\ell=i}
      H_{S'}^j(X_{1S'},Y_{1S'};R)\otimes H_{S'}^\ell(X_{2S'},Y_{2S'};R)\\
      &= H_{S'}^i(X_{3S'},Y_{3S'};R) 
    \end{split}
  \end{equation*}
 A proof of  the  K\"unneth formula can be found in
  \cite[thm 4.3.14]{dimca}; although the reference assumes $R$ is a
  field, it suffices to assume  that the cohomology of one of the factors
  is flat.

\end{proof}

 Let $S$ be a $k$-variety. Define a  quiver $\Delta(S)$ as follows.
When $S$ is connected, the vertices  are triples $(X\to S, Y, i)$
consisting of
\begin{itemize}
\item a  quasi-projective morphism $X\to S$;
\item a closed subvariety $Y\subseteq X$ such that the pair $(X\to
  S,Y)$ has the base change property;
\item a natural number $i\in \N$.
\end{itemize}
One should think of  $(X\to S, Y, i)$ as the symbol representing
$H_S^i(X,Y)$. Let us refer to a pair $(X\to S, Y)$ satisfying the
first two conditions as an {\em admissible} pair.
The set of edges, or morphisms,  of  $\Delta(S)$  is the union of the  following two sets:
\begin{enumerate}
\item[Type I:] Geometric morphisms 
$$(X\to S,Y,i)\to (X'\to S,Y',i)$$ 
for every morphism of $S$-schemes
$X\to X'$ sending $Y$ to $Y'$.

\item[Type II:] Connecting or boundary morphisms 
$$(f:X\to S,Y,i+1)\to (f|_Y:Y\to S,Z,i)$$ 
for every chain $Z\subseteq Y\subseteq
  X$ of closed sets.
\end{enumerate}
When $S$ has several connected components $S_i$, we take $\Delta(S)=
\prod \Delta(S_i)$.

Call a sheaf $\F$ of $R$-modules on   $S_{an}$ {\em $k$-constructible} or simply constructible, if it
has finitely generated stalks and if
there exists a partition $\Sigma=\{Z_i\}$ of $S$ into Zariski locally closed
sets such that $\F|_{Z_{i,an}}$ is locally constant. If $\Sigma$ is given, then $\F$ is
called constructible with respect to $\Sigma$.
The term ``$k$-constructible" is meant
to signify that even though the sheaf is on $S_{an}$, the strata $Z_i$ are defined over $k$.
Let $\Cons(S_{an}, R)$ (or $\Cons(S_{an},\Sigma, R)$) denote the full subcategory of the category of
sheaves of $R$-modules consisting of $k$-constructible sheaves (with respect to $\Sigma$).
It is abelian and $R$-linear. Let $\Delta(S)^{op}$ denote the opposite
quiver, which means that the edges are reversed.
We define a  representation  $H:\Delta(S)^{op}\to \Cons(S_{an}, R)$
which sends $(X\to S,Y,i)$ to
$$H(X\to S,Y,i; R) := H_S^i(X_{an}, Y_{an}; R)$$  
The action of $H$ on edges is as follows. 
We start with the easier case of  a  morphism of type II. 
To 
$$(f:X\to S,Y,i+1)\to (f|_Y:Y\to S,Z,i)$$ 
we assign the connecting map 
$$H_S^i(Y, Z;R)\to H_S^{i+1}(X, Y;R)$$
 induced by the exact sequence
\begin{equation}\label{eq:jXYjXZ}
0\to j_{XY!} R \to j_{XZ!} R \to j_{YZ!} R\to 0 
\end{equation}
For a morphism
$$g:(f:X\to S,Y,i)\to (f':X'\to S,Y',i)$$ 
 of type I, the map on cohomology
 \begin{equation}\label{eq:HtypeI}
H_S^i(X',Y'; R)= R^i f'_*j_{X'Y'!}R  \to R^i f_*j_{XY!}R =H^i_S(X,Y;R)
\end{equation}
is constructed below. We have a  commutative diagram of distinguished triangles
\begin{equation}\label{eq:jXYtriangle}
\xymatrix{
  j_{X'Y'!}R_{X'-Y'}\ar[rd]\ar@{-->}[dd] &  &  R_{Y'}\ar[ll]^{[1]}\ar[dd] \\ 
  &  R_{X'}\ar[ru]\ar[dd] &  \\ 
 \R g_*j_{XY!}R_{X-Y}\ar[rd] &  &  \R g_*  R_{Y}\ar[ll]^{[1]} \\ 
  &  \R g_* R_{X}\ar[ru] & 
}
\end{equation}
The dotted arrow above  induces a map
$$\R f'_*j_{X'Y'!}R \to  \R f'_*\R g_*j_{XY!}R \cong \R f_*j_{XY!}R$$
which gives \eqref{eq:HtypeI}

\begin{rmk}\label{rmk:6fun}
 It should be clear that one can define a representation of $\Delta(S)^{op}$ as above for any theory satisfying 
 Grothendieck's ``six operations" formalism
 (as laid out in \cite[pp 43-44]{bbd} for example).
 In fact, one only needs a theory with direct images and extensions by zero, for which analogues of \eqref{eq:jXYjXZ}
 and \eqref{eq:jXYtriangle} exist.
\end{rmk}
%is induced from the restriction
%$$H^i({f'}^{-1}U, {f'}^{-1}U\cap Y';R)\to H^i(f^{-1}U, f^{-1}U\cap Y;R)$$
% %More details can be found in \cite[p 154]{arapura}. {EXPAND]

Now we can apply the N${}^+$ construction  to obtain the category
of effective motivic (constructible) sheaves $\M(S, R):=\A_R(H)$.
If $R$ is understood, we write $\M(S) = \M(S,R)$ and  $\M(k) = \M(\Spec k)$. 
The category of motivic sheaves $\cM(S)$ will be built from this in
the next section by inverting a certain object.
If $\Sigma$ is a finite partition of $S$ into locally closed sets, let $\Delta(S,\Sigma)\subset \Delta(S)$
denote the full subquiver of triples $(X\to S, Y,i)$ such that $H_S^i(X,Y)\in \Ob \Cons(S,\Sigma)$.
Then we can consider the subcategory
$$\M(S,\Sigma,R) = \A_R(H|_{\Delta(S,\Sigma)})\subset \M(S,R)$$
of motivic sheaves constructible with respect to $\Sigma$. 
We define the subcategory of motivic local systems as
$$\M_{ls}(S,R) = \M(S,\{S\}, R)$$
The image of $\M_{ls}(S,R)$ under $R_B$ is contained in the category of local systems in the usual sense.

\begin{rmk}\label{rmk:compare}
  Let us compare the story so far with what was done in \cite{arapura}.
\begin{enumerate}
\item In the earlier paper, $\M(S)$ was not considered; $\cM(S)$ was constructed in a single
step. This required a more complicated definition of $\Delta(S)$,
where objects had an extra paramater, and there was an  additional set of  morphisms.

\item Another change in the present definition of $\Delta(S)$ is to  require
  that pairs have the base change property rather than the
  stronger condition that they be controlled. This condition is
   used later for the existence of inverse and direct images (\eqref{eq:pullback} and theorem \ref{thm:leray}).

\item In \cite{arapura}, we only considered the case where $R$ was a field. There
$\cM(S)$ had coefficients in $\Q$.

\item The present construction  corresponds to what were called {\em
    premotivic}  sheaves in \cite{arapura}. There was an additional
  step of forcing $\cM(-)$ to be a stack in the Zariski topology.
  This  could also be done here,
  but  it is not necessary for the present purposes.
  
  \item In \cite{arapura},  the categories $\cM(S,\Sigma)$ were defined first, and $\cM(S)$ was taken to be the $2$-colimit.

\end{enumerate}
\end{rmk}

Let us recapitulate the universal property of the N${}^+$ construction in this context.

\begin{thm}\label{thm:M}
  There is  a faithful exact $R$-linear functor to $R_B:\M(S,R)\to \Cons(S_{an}, R)$, and $H$
  factors through it. This is universal in the sense that given any
  other such factorization  $\Delta(S)\to \mathcal{B}\to \Cons(S_{an},
  R)$ through an $R$-linear abelian category such that the last functor is exact and faithful, we have an essentially unique dotted arrow completing the diagram 
  $$  
\xymatrix{
 \Delta(S)^{op}\ar[r]^{h}\ar[d]\ar[rd]^{H} & \M(S,R)\ar[d]^{R_B}\ar@{-->}[ld] \\ 
 \mathcal{B}\ar[r] & \Cons(S_{an}, R)
}
$$
  
\end{thm}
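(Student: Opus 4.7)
The plan is to obtain the theorem as a direct specialization of Theorem \ref{thm:bp} to the representation $H:\Delta(S)^{op}\to \Cons(S_{an},R)$ that has just been defined. Since the paper has already verified that $\Cons(S_{an},R)$ is an $R$-linear abelian category and since $\M(S,R)$ is \emph{defined} to be $\A_R(H)$, there is essentially nothing to do beyond unwinding notation and checking the hypotheses of the N${}^+$ construction.

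Concretely, I would first record that the target $\Cons(S_{an},R)$ satisfies the hypotheses of Theorem \ref{thm:bp}: it is $R$-linear and abelian (closure under kernels and cokernels of $k$-constructible sheaves is standard, using that the class of $k$-constructible sheaves is stable under the usual sheaf operations and that $R$ is noetherian, ensuring stalkwise finite generation passes to sub- and quotient sheaves). Next, I would check that $H$ is well-defined as a quiver representation, i.e.\ that for each vertex $(X\to S,Y,i)$ the sheaf $H_S^i(X_{an},Y_{an};R)$ is $k$-constructible; this is where the base change property of the objects of $\Delta(S)$ is used, together with generic local constancy of higher direct images along a suitable stratification of $S$. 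The two types of edges are sent to the evident restriction and connecting maps already described in the text, so $H$ is indeed a representation of $\Delta(S)^{op}$.

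Applying Theorem \ref{thm:bp} to $F=H$ then produces an $R$-linear abelian category $\A_R(H)=\M(S,R)$ together with a $2$-commutative diagram
$$
\xymatrix{
\Delta(S)^{op}\ar[r]^-{h}\ar[rd]_-{H} & \M(S,R)\ar[d]^{\phi}\\
& \Cons(S_{an},R)
}
$$
in which $\phi$ is $R$-linear, faithful, and exact. Setting $R_B:=\phi$ gives the asserted factorization of $H$. The universal property of the theorem is then precisely the universal property of $\A_R(H)$ as stated in Theorem \ref{thm:bp}: any factorization $\Delta(S)^{op}\to \mathcal{B}\to \Cons(S_{an},R)$ with $\mathcal{B}\to \Cons(S_{an},R)$ faithful and exact produces an essentially unique exact $R$-linear functor $\M(S,R)\to \mathcal{B}$ making the diagram commute up to natural isomorphism.

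There is no real obstacle here; the only mild point requiring care is the well-definedness of $H$ into the category of $k$-constructible sheaves, and this is handled by the base change condition built into the definition of $\Delta(S)$ together with the generic local constancy result cited from \cite[thm 3.1.10]{arapura}. Everything else is formal from Theorem \ref{thm:bp}.
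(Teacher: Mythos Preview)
Your proposal is correct and matches the paper's approach: the paper explicitly introduces Theorem~\ref{thm:M} with the words ``Let us recapitulate the universal property of the N${}^+$ construction in this context,'' and gives no separate proof, since $\M(S,R)$ is by definition $\A_R(H)$ and the statement is Theorem~\ref{thm:bp} verbatim. One small correction: the base change property is \emph{not} what guarantees that $H$ lands in $k$-constructible sheaves (constructibility of higher direct images is standard and holds without it); as Remark~\ref{rmk:compare}(2) says, that condition is imposed only for later use in (P5) and Theorem~\ref{thm:image}, and could be dropped here entirely.
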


 We call  the above functor $R_B$, the Betti realization. Given $(X\to S,Y)\in
 \Ob\Delta(S)$, let $h^i_S(X,Y) = h(X\to S, Y, i )$.
 
 Suppose that $f:T\to S$ is a morphism of $k$-varieties. We can
define a morphism of quivers $f^*:\Delta(S)^{op}\to \Delta(T)^{op}$ which takes
$$(X\to S, Y, i) \mapsto (X_T\to T, Y_T, i)$$
Since $(X\to S, Y)$ has the base change property,
$$H(X_T \to T, Y_T, i) \cong f^*H(X\to S, Y, i)$$
Therefore corollary \ref{cor:bp} can be applied to show that there is an
exact functor
\begin{equation}\label{eq:pullback}
f^* :\M(S, R)\to \M(T, R)
\end{equation}
which is compatible with $f^*$ for sheaves under Betti realization
(compare \cite[3.5.2]{arapura}).

 \section{\'Etale realization}

 We want to discuss some other realizations of $\M(S,R) $
 starting with the \'etale realization with finite coefficeints.
 We fix an embedding of the algebraic closure $\bar k\subset
  \C$, and let $\bar X= X\times_{\Spec k} \Spec \bar k$ etc. 
 
 \begin{enumerate}

\item[(R1)] Let $R$ be a finite ring. 
We get a representation  of $\Delta(S)^{op}$ to the category  $ \Cons(S_{et},R)$ of constructible
$R$-modules on $S_{et}$, which sends 
$$(X\to S, Y, i)\mapsto H_S^i(\bar X_{et},\bar Y_{et}, R ):=R^i \bar f^{et}_*\bar j^{et}_{\bar X,\bar Y!} R_{\bar X-\bar Y}$$
This extends to a representation by remark~\ref{rmk:6fun}.
The comparison theorem \cite[ exp XVI, thm 4.1; exp XVII, thm 5.3.3]{sga4} plus theorem \ref{thm:M} implies that there is
an exact faithful  functor $R_{et}:\M(S, R) \to \Cons(S_{et}, R)$  (compare \cite[3.4.6]{arapura}),
called the  \'etale realization.

\end{enumerate}
 
 Before describing the  $\ell$-adic realization,  we need to recall
 some facts about $\ell$-adic sheaves.   A constructible $\ell$-adic
 sheaf on  a scheme $S$ is really an inverse system $\ldots \F_n\to \F_{n-1}\ldots$, where each
 $\F_{n}$ is a constructible sheaf of $\Z/\ell^n\Z$-modules on the
 \'etale topology $S_{et}$, such that each projection yields an isomorphism $\Z/\ell^{n-1}\Z\otimes \F_n\cong \F_{n-1}$.
 The collection of constructible $\ell$-adic sheaves can be made into a $\Z_\ell$-linear
 abelian category $ \Cons(S_{et},\Z_\ell)$ with an appropriate definition  \cite[exp V, VII]{sga5}.
 Ekedahl \cite{ekedahl} has  constructed a triangulated category, that
 we denote by $D_{ek}^b(S,\Z_\ell)$, that behaves
 like the derived category of $ \Cons(S_{et},\Z_\ell)$, and possessess
 Grothendieck's six operations.  More
 precisely, it has a $t$-structure with heart  $ \Cons(S_{et},\Z_\ell)$,
 and a conservative triangulated functor $\Z/\ell\otimes-:D_{ek}^b(S,\Z_{\ell})\to
 D^b(S_{et}, \Z/\ell \Z)$. One has ordinary and extraordinary direct and inverse images in
 $D^b_{ek}(-, \Z_\ell)$ \cite[thm 6.3]{ekedahl}, and these are  compatible with the
 corresponding operations in $D^b(-, \Z/\ell\Z)$. We can therefore define
 $$H_S^i(\bar X_{et},\bar Y_{et}, \Z_\ell )=R^i \bar f^{et}_*\bar j^{et}_{\bar X,\bar Y!} \Z_{\ell,\bar X-\bar Y}$$
 where the above operations and $t$-structure are used  to define $R^i\bar f^{et}_*$ etc.
 
 In this paragaph, let us suppose that $k=\C$. Given  a $\C$-variety $X$,  define the site $X_{cl}$ with objects given by local homeomorphisms
$U\to X_{an}$ and coverings are surjective families $\{U_i\to U\}$. There
 is an obvious map of sites $X_{cl}\to X_{an}$ , which induces an equivalence of  topoi, i.e. of categories of
sheaves \cite[exp XI \S 4]{sga4}. There is a canonical morphism of topoi
$\epsilon:X_{cl}\to X_{et}$ which induces a map from \'etale to classical cohomology.
Ekedahl's construction, which is quite general, can be applied to $X_{cl}$, to yield a triangulated
category $D^b_{ek}(X_{cl}, \Z_\ell)$.  Using the above equivalence of topoi and  \cite[thm 7.2]{ekedahl},
 we obtain equivalences
$$D^b_{ek}(X_{cl}, \Z_\ell)\cong D^b_{ek}(X_{an}, \Z_\ell)\ \cong D^b_c(X_{an}, \Z_\ell)$$
where the category on the right is the  usual constructible derived category. One can see from construction that this
equivalence  is compatible with ordinary and proper direct images. We now come to the key comparison result.

\begin{prop}
 Suppose that $f:X\to Y$ is a morphism of $\C$-varieties, and that
$\F$ is a constructible $\ell$-adic sheaf. Then 
there  are canonical isomorphisms
$$\epsilon^*R^i f^{et}_*\F \cong R^i f^{an}_* \epsilon^*\F$$
$$\epsilon^*R^i f^{et}_!\F \cong R^i f^{an}_! \epsilon^*\F$$
where the direct images on the right are computed in the constructible derived categories.
\end{prop}
 
\begin{proof}
 Consider the distinguished triangle
 $$\epsilon^*R^if^{et}_* \F \xrightarrow{\kappa}R^i f^{an}_* \epsilon^*\F\to C\xrightarrow{[1]}$$
 where $\kappa$ is the canonical map.  The usual comparison
 theorem  \cite[ exp XVI, thm 4.1; exp XVII, thm 5.3.3]{sga4} shows that  $\Z/\ell\otimes C=0$. Since $\Z/\ell\otimes-$ is conservative,
 $C=0$. The proof of the second isomorphism is similar.
\end{proof}

\begin{enumerate}

\item[(R2)] 
We get a representation  of $\Delta(S)^{op}$ to the category  $ \Cons(S_{et},\Z_\ell)$  which sends 
$$(X\to S, Y, i)\mapsto H_S^i(\bar X_{et},\bar Y_{et},\Z_\ell )$$
This extends to a representation by remark~\ref{rmk:6fun}.
The previous proposition plus theorem \ref{thm:M} implies that there is
an exact faithful  functor $R_{\ell}:\M(S,\Z_\ell)\to \Cons(S_{et},\Z_\ell)$  (compare \cite[3.4.6]{arapura}),
called the $\ell$-adic   realization.

\item[(R3)] We can take the product over all primes to get a representation
$$(X\to S, Y, i)\mapsto \prod_\ell H_S^i(\bar X_{et},\bar Y_{et},\Z_\ell )\in \Ob\prod_\ell \Cons(S_{et},\Z_\ell)$$
and this yields a realization
$$\widehat{R}:\M(S,\widehat{\Z})\to \prod_\ell \Cons(S_{et},\Z_\ell)$$

\item[(R4)]   If $R'$ is a  faithfully flat $R$-algebra, there is an $R$-linear exact  change of coefficients functor
 $R'\otimes_R -: \M(S, R)\to \M(S, R')$ fitting into a commutative diagram
   $$
 \xymatrix{
  \M(S, R)\ar[r]^{R'\otimes_R }\ar[d] & \M(S,R')\ar[d] \\ 
 \Cons(S_{an}, R)\ar[r]^{R'\otimes_R } & \Cons(S_{an}, R')
 }
   $$
To see this, 
  define $\M(S,R'/R)$ to be the category whose objects are triples  $(M, L, \phi)$ with $(M,L)\in
  \M(S,R')\times \Cons(S_{an}, R)$ and $\phi: R_B(M)\cong R'\otimes_R L$, and
  with  the  obvious notion of morphisms. Theorem \ref{thm:M} implies the existence of an exact functor $\M(S,R)\to
  \M(S,R'/R)$. Compose this with the projection $\M(S,R'/R)\to \M(S,R')$.
  
\item[(R5)] By combining (R3) and (R4), and taking a projection, one obtains a  realization
  $$\M(S,\Z)\xrightarrow{\widehat{\Z}\otimes} \M(S, \widehat{\Z})\to \Cons(S_{et}, \Z_\ell)$$
  (The same sort of  trick should be applicable to Ivorra's category.)
  
 \end{enumerate}

\section{Hodge realization over a curve}

 Nori constructed a Hodge realization $R_H$ from $\M(k,\Z)$ to the category of
integral mixed Hodge structures using the representation that assigns  to $(X,Y,i)$ the Deligne
mixed Hodge structure on  $H^i(X,Y;\Z)$. Over a general base, things are more complicated.
Saito \cite{saito} has defined his  category of mixed Hodge modules $\MHM(S)$ with the following properties:

\begin{enumerate}
\item  Over a point $\MHM(pt)$ is just
the category of polarizable rational mixed Hodge structures. When $S$ is smooth, objects of $\MHM(S)$ include
polarizable variations of pure Hodge structures, and more generally admissible variations of mixed Hodge structures \cite[thm 0.2]{saito}
\item The category $\MHM(S)$ is abelian and $\Q$-linear. 
There is an exact faithful forgetful functor from $\MHM(S)$ to the category of  rational perverse
sheaves $Perv(S)$ \cite{bbd}.
\item The previous functor extends to a triangulated functor
$D^b\MHM(S)\to D_c^b(S,\Q)$ to the constructible derived
category. 
\item The standard operations $\R f_*, \R f_!,\ldots$ on $D_c^b(-)$  extend to
operations $f^H_*, f_!^H,\ldots$ on $D^b\MHM(-)$ \cite[thm 0.1]{saito}. 

\end{enumerate}

There are two natural $t$-structures \cite[\S 1.3]{bbd}
on $D^b\MHM(S)$. The standard  one has $\MHM(S)$ as
its heart. There is a second $t$-structure on $D^b\MHM(S)$, that we call the classical
$t$-structure,  which corresponds to
the usual one on $D_c^b(S)$ (\cite[appendix C]{arapura}, \cite[rmk 4.6]{saito})). Let 
us call the heart of the classical $t$-structure, the category of
constructible mixed Hodge modules, and denote it by $\CMHM(S)$. 
It possesses a faithful exact functor to $\Cons(S,\Q)$.
To each  of the  $t$-structures, there are associated
cohomological functors ${}^pH^*:D^b\MHM(S)\to \MHM(S)$ and
 ${}^cH^*:D^b\MHM(S)\to \CMHM(S)$ respectively.
In  \cite[3.4.7]{arapura}, we defined
 a Hodge realization functor
$$R_H:\M(S,\Q)\to \CMHM(S)$$
using the representation
$$(f:X\to S,Y,i) \mapsto {}^cH^i f_*^H j^H_{XY!}\Q$$
Note that one can check that this is a representation with the help of  remark~\ref{rmk:6fun}.

The category  $\CMHM(S)$ is constructed abstractly, so the structure
of its objects is not immediately obvious.
We will give a more explicit alternative description of constructible mixed Hodge modules when $S$ is an irreducible
smooth complex curve.  A similar description is possible in general,
but the notation becomes somewhat more cumbersome.
We fix a partition $\Sigma = \{U, p_1,\ldots, p_n\}$ of
$S$ into an open set $U$ and closed points $p_i$. Let $j:U\to S,
i_1:p_1\to S, \ldots$ denote the inclusions.  An admissible variation
of mixed Hodge structures on $U$, consists of a $\Q$-local system
$\F$, plus some other data which imply that all the stalks $\F_x$ are endowed with mixed
Hodge structures. See \cite[\S 3]{sz} for the precise definition.
These form a $\Q$-linear abelian
category $\VMHS(U)$. Given an object $\F\in \Ob \VMHS(U)$,
we can view the perverse sheaf $\F[1]$ as part of a  mixed Hodge module by \cite[thm 0.2]{saito}.
So $\F$ can be viewed as an object of $D^b\MHM(U)$.  We define
$\CMHM(S,\Sigma)$ to be the category with objects
$$\{(\F,M_1,\ldots,\gamma_1,\ldots)\mid \F\in \Ob\VMHS(U), M_a\in \MHM(pt),
\gamma_a:M_a\to H^0(i_a^{H*}j_*^H \F)\}$$
The object $H^0(i_a^{H*}j_*^H \F)$ is a mixed Hodge structure with
underlying vector space  $i_a^*j_*\F$. % If we choose a small disk $D_a$
% about $p_a$, and $s\in D_a-p_a$, then $H^0(i_a^{H*}j_*^H \F)\subset
% \F_s$ is the largest submixed Hodge structure invariant under local monodromy.
We require that the gluing maps $\gamma_a$
are morphisms of mixed Hodge stuctures. A morphism $(\F,M_1,\ldots)\to (\F',
M_1',\ldots)$
is a collection of morphisms $\F\to \F'$, $M_a\to M_a'$  which are
compatible with the gluing maps. It is not difficult to see that:

\begin{lemma}
 \-
 \begin{enumerate}
\item $\CMHM(S,\Sigma)$ is
a $\Q$-linear abelian category.
\item The functor $F:\CMHM(S,\Sigma)\to \Cons(S,\Sigma)$ which sends
$(\F,M_1,\ldots)$ to
$$\ker \left[j_*\F\oplus\bigoplus_a i_{a*}M_a\to
  \bigoplus_a i_{a*}i_a^*j_*\F\right],$$
where the map is the difference of the adjunction map and $\sum \gamma_a$,
is exact and faithful.
\end{enumerate}
\end{lemma}

Let us outline the construction of the  Hodge realization
$$H_H:\M(S,\Sigma, \Q)\to \CMHM(S,\Sigma)$$
Given $(X,Y,i)\in \Delta(S,\Sigma)$, we need to assign
an object $H_H(X,Y,i)= (\F, M_1,\ldots)$ whose image under $F$ is
$H_S^i(X,Y)$.
Let $X_{p_a} $ and $Y_{p_a}$ denote fibres over $p_a$,
and let $I_a:X_{p_a}\to X$ and $J:X_U\to X$ denote the inclusions.
We set $M_a=H^i(X_{p_a}, Y_{p_a})$ with the Deligne mixed Hodge
structure. Since  $(X,Y,i)\in \Delta(S,\Sigma)$,
$\F=H^i_U(X_U,Y_U)$ is a local system. It carries an admissible variation of mixed Hodge structure, namely
$$({}^pH^i f_*^H j^H_{XY!}\Q)[-1]$$
 This can also be constructed by hand using methods of \cite{sz}, but this is quite a long process.
 The  adjunction maps
$$ I_a^{H*}j_{XY!}^H\Q\to  I_a^{H*}J^H_*J_{H}^* j^{H}_{XY!}\Q$$
induce maps on cohomology
$$H^i(X_{p_a},Y_{p_a})\to i_a^*j_*H^i_U(X_U,Y_U)$$
These are gluing maps. 
For any sheaf $\G$ on $S$, one can check by examining stalks that
$$0\to\G\to j_*j^*\G\oplus\bigoplus_a i_{a*}i_a^*\G\to
  \bigoplus_a i_{a*}i_a^*j_*j^*\G$$
is exact. Applying this observation to $\G= H_S^i(X,Y)$, shows that $F(H_H(X,Y,i))\cong \G$. 
It  remains to check that $H_H$ gives a representation, but this 
 follows with the help of remark~\ref{rmk:6fun}. We note that the
 image of $\M_{ls}(S)$, under the Hodge realization, lies in $\VMHS(S)$.

\section{Motivic sheaves}

Given a category $A$ with an endofunctor $L:A\to A$, following \cite[7.6]{ivorra},
we define a new category $A[L^{-1}]$ with objects $\Ob A\times \Z$, and morphisms
$$Hom_{A[L^{-1}]}((a,n), (b,m))= \varinjlim_i Hom_A(L^{i+n}a,L^{i+m}b)$$
The map $a\mapsto (a,0)$ extends to a functor $A\to A[L^{-1}]$

\begin{lemma}\label{lemma:ivorra}
\-
\begin{enumerate}
\item 
If $A$ is $R$-linear abelian, and $L$ is $R$-linear and exact, then $A[L^{-1}]$ is $R$-linear abelian, and $A\to A[L^{-1}]$
 is exact.  

\item If $L$ is an equivalence, then $A$ is equivalent to $A[L^{-1}]$.
\item There exists a $2$-commutative diagram
 $$
 \xymatrix{
  A\ar[r]^{L}\ar[d] & A\ar[d] \\ 
 A[L^{-1}]\ar[r]^{L'} & A[L^{-1}]
}
 $$
 where  $L'$  is an equivalence. 
\item Given a functor $F:A\to B$ and a $2$-commutative diagram
  $$
 \xymatrix{
  A\ar[r]^{L}\ar[d] & A\ar[d] \\ 
 B\ar[r]^{L''} & B
}
 $$
there exists a $2$-commutative diagram
$$
 \xymatrix{
  A\ar[r]\ar[d] & A[L^{-1}]\ar[d] \\ 
 B\ar[r] & B[(L'')^{-1}]
}
 $$
\end{enumerate}
\end{lemma}

\begin{proof}
 The first statement is \cite[lemma 7.4]{ivorra}. If $L$ is an
 equivalence with quasi-inverse $L^{-1}$, one sees that $(a,n)\cong
 (L^na,0)$, and that
$$Hom_{A[L^{-1}]}((a,0),(b,0))\cong Hom_A(a, b)$$
So the functor $A\to A[L^{-1}]$ is essentially surjective and fully faithful.
One defines $L'(a,n)= (La,n+1)$, and checks this gives an auto-equivalence
of $A[L^{-1}]$ extending $L$.
 The last  statement is clear from the construction.
\end{proof}

\begin{cor}
If the functor $L''$  in (4) is an equivalence, there is a $2$-commutative diagram
 $$
 \xymatrix{
  A\ar[r]\ar[rd] & A[L^{-1}]\ar[d]\\ 
  & B
}
 $$
\end{cor}

Call $(X\to S, Y, i)\in \Delta(S)$ 
{\em cellular} if $X/S$ is affine with equidimensional fibres,  such that
$$H^j_S(X,Y;R)=0$$
unless $j=i$, and $H^i_S(X,Y;R)$ is flat over $R$.
A basic example  of cellular object is $({\Gm}_S\to S, \{1\},1)$.
We refer to  the corresponding motive $h_S^1(\mathbb{G}_{m,S}, 1)$ as the Lefschetz motive.
In the absolute case, a cellular object is  what Nori calls a ``good
pair"  \cite{hm}.

\begin{lemma}
Given a cellular object $(Z\to S, W, j)$, let $M=H^{j}_S(Z,W;R)$.
The map $\zeta:\Ob \Delta(S)\to \Ob\Delta(S)$ given by
\begin{equation}
  \label{eq:prodcellular}
(X\to S, Y,i)\mapsto  (Z\times_S X\to S, W\times_S X\cup Z \times_S Y, j+i)  
\end{equation}
is a morphism of quivers.
The K\"unneth isomorphism
$$H_S^{i+j}(Z\times_S X, W\times_S X\cup Z \times_S Y; R) \cong M\otimes_R H^i(X, Y; R)$$
 renders the diagram
\begin{equation}\label{eq:lambda}
\xymatrix{
 \Delta(S)^{op}\ar[r]^{\zeta^{op}}\ar[d] & \Delta(S)^{op} \ar[d] \\ 
 \Cons(S_{an}, R)\ar[r]^{M\otimes} & \Cons(S_{an}, R)
}
\end{equation}
$2$-commutative. 
\end{lemma}

We omit the proof, but remark that the right side of
\eqref{eq:prodcellular} lies in $\Ob\Delta(S)$ by lemma~\ref{lemma:prodbasechange}.
It follows from this lemma and corollary \ref{cor:bp}, that if $(Z\to
S, W, j)$ is cellular, then we 
 can construct an induced exact endofunctor
 $$ h_S^j(Z,W)\otimes -:\M(S,R)\to \M(S, R)$$
Define the exact endofunctor
$\LLe :\M(S,R)\to \M(S,R)$ by $\LLe=h^1_S(\mathbb{G}_{m,S}, 1)\otimes -$.
Set
 $\cM(S,R) := \M(S,R)[(\LLe)^{-1}]$. Then there exists a $2$-commutative diagram
 $$
 \xymatrix{
 \M(S,R)\ar[r]^{\LLe}\ar[d] & \M(S,R)\ar[d] \\ 
 \cM(S,R)\ar[r]^{\LL} & \cM(S, R)
}
 $$
 with  $\LL$  invertible. Furthermore, $\cM(S,R)$ is the
 universal such category. We refer to this as the category of {\em motivic
 sheaves with coefficients in $R$}, and $\cM(S)= \cM(S,\Q)$ simply as the category of 
   motivic sheaves. The category 
$\M(S)$ is good enough for most purposes, but inverting $\LL$ becomes
important in  situations where one considers duals.

The $2$-commutativity of \eqref{eq:lambda} shows that
 there is a natural isomorphism $R_B\circ \LLe\cong R_B$. Therefore $R_B$
 extends to an exact functor $\cM(S,R)\to \Cons(S_{an}, R)$ by the
 universal property. The construction of
 $\M(S,R)[(\LLe)^{-1}]$  shows that this is faithful. 
 We note that the category $\CMHM(S)$ is stable under the operations $M\mapsto M\otimes \Q(\pm 1)$.

\begin{lemma}\label{lemma:real}
 The functors $R_\ell, R_H$ and $f^*$  extend to $\cM(-)$. Tate twists in $\cM(S)$ are compatible
 with these operations in the sense that
 $$R_\ell \circ \LL \cong \Z_\ell(-1)\otimes R_\ell$$
 $$ R_H\circ \LL \cong \Q(-1)\otimes R_H$$
  $$ f^* \circ \LL \cong \LL\circ f^*$$
\end{lemma}

\begin{proof} By K\"unneth, one gets an isomorphism of \'etale cohomology
$$H_S^{i+j}(\Gm\times_{\bar S} \bar X, 1\times_{\bar S} \bar X\cup \Gm
\times_{\bar S} Y;
\Z_\ell) \cong \Z_\ell(-1)\otimes_{\Z_\ell} H^i(\bar X, \bar Y; \Z_\ell)$$
Therefore
$$R_\ell \circ \LLe \cong \Z_\ell(-1)\otimes R_\ell$$
Similarly one checks that
 $$ R_H\circ \LLe \cong \Q(-1)\otimes R_H$$
  $$ f^* \circ \LLe \cong \LLe\circ f^*$$
 on $\M(-)$. The lemma is a formal consequence of these identities.
\end{proof}
 
The category $\cM(k)=\cM(\Spec k)$ is precisely Nori's category of mixed motives.
Let $\cM^{pure}(k)\subset \cM(k)$ denote the full subcategory generated by subquotients of $h^i(X)$, with $X$ smooth and projective.
This subcategory can be related to other  constructions.
 Andr\'e defined a category of pure motives \cite{andre}, by replacing algebraic cycles in Grothendieck's construction
 by his motivated cycles.
 If one  assumes the standard conjectures, then  Andr\'e's and Grothendieck's categories of motives would coincide.

\begin{thm}\label{thm:andre}
 The category $\cM^{pure}(k,\Q)$ is equivalent to Andr\'e's category of pure motives.
\end{thm}

\begin{proof}
 See \cite[thm 6.4.1]{arapura} or \cite[thm 10.2.7]{hm}.
\end{proof}

\begin{cor}\label{cor:andre}
 The category $\cM^{pure}(k, \Q)$ is semisimple abelian.
\end{cor}

\begin{proof}
 \cite[thm 0.4]{andre}.
\end{proof}

\section{Background on cellular decomposions}

We recall some background results from
\cite{arapuraL,arapura}, along with some simplifications. First, we recall  Jounalou's trick
\cite[lemma 1.5]{jounalou}.

\begin{lemma}[Jouanalou]\label{lemma:jou}
If $S$ is a quasi-projective variety then there exists an affine
variety $T$ and a smooth morphism $\pi:T\to S$ which is Zariski
locally isomorphic to $\mathbb{A}^n_S$ for some $n$.
\end{lemma}

Fix $\pi:T\to S$ as above. Since $\C^n$ is contractible,
then given $\F\in \Cons(S_{an})$, we have an isomorphism
$H^i(T, \pi^*\F)\cong H^i(S_{an}, \F)$ for each $i$. So for our purposes, we lose
nothing by working on $T$. 
The next result is key, and is a consequence of Beilinson's basic
lemma.  First, we need some notation. Given a sheaf $\F$ on $T$, and
closed sets $T_1\subset T_2\subset T$,
let
$$H^i(T_2, T_1; \F) = H^i(T_2, j_{T_2T_1!} (\F|_{T_2-T_1}))$$
where
$$j_{T_2T_1}:T_2-T_1\hookrightarrow T_2$$
is the inclusion.
When $\F$ is constant, this is just the cohomology of the
pair in the usual sense.

\begin{lemma}\label{lemma:beil}
Given $\F\in \Cons(T_{an})$,
 there exist a chain
   $$\emptyset = T_{-1}\subset T_0\subset T_1\subset \dots =T$$
  of equidimentional closed sets with $\dim T_i= i$,
   such that for all $a$
   $$H^i(T_a, T_{a-1}; \F)=0$$
   unless $i=a$.  Furthermore, $T_\dt$ can be chosen to refine a given chain. 
\end{lemma}

\begin{proof}
  \cite[lemma 3.7]{arapuraL}.
\end{proof}

Let us say that an admissible  pair $(X\to S,Y)$ is {\em cellular }
with respect the chain $T_\dt$ if for all $a$
   \begin{equation}
     \label{eq:cellular}
        H^i(T_a, T_{a-1}; \pi^*H_S^*(X,Y))=0\text{ if } i\not=a 
   \end{equation}
    We say that  an object $(X\to S, Y,j)$ is cellular with
    respect to  $T_\dt$,
    if $(X\to S, Y)$ is.
 Let $\Delta(S,T_\dt)\subset \Delta(S)$ be
   the full sub quiver of triples $(X\to S,Y, j)$ cellular with respect to $T_\dt$.

   \begin{cor}\label{cor:adapt}
     $\Delta(S)$ is a directed union of
 $\Delta(S, T_\dt)$, as $T_\dt$ runs over various
 chains (with $T$ fixed as above).
   \end{cor}

Fix a commutative noetherian ring $R$. Cohomology should be understood to take
values in $R$ if coefficients are not specified.
Suppose that  $(X\to S, Y)$ is an admissible pair which is cellular  with
respect to   $T_\dt$. 
Let  $\F= \pi^*\HH_S (X,Y;R)\in D^b(T_{an}, R)$, and
$ \G =\G_{(X,Y)}=\R \Gamma\F\in D^b(R\text{-mod})$. We construct filtrations  on $\G$ by $P^\dt(\G)=\R \Gamma\tau_{\le-\dt}\F$,
and  
$$F^\dt \G= \R \Gamma j_{T T_\dt !}j_{T T_\dt}^*\F$$
Define a new filtration $Dec(F)$ by  {\em d\'ecalage} \cite[1.3.3]{deligne-hodge}.
 
\begin{lemma}\label{lemma:leray}
The identity of $\G$ induces an isomorphism in the filtered derived category
  $$(\G, P)\cong (\G, Dec(F))$$
\end{lemma}

\begin{proof}
This follows from  de Cataldo-Migliorini \cite[prop 5.6.1]{cm}. The conditions of their
proposition hold because of \eqref{eq:cellular}.
 
\end{proof}

As a corollary, we get an isomorphism of spectral sequences associated
to the two filtrations
\begin{equation*}
  \begin{split}
    {}_P E_1^{pq} &= H^{p+q}(Gr^p_P \G)\\ 
\cong {}_{Dec(F)}E_1^{pq } &=
H^{p+q}(Gr^p_{Dec(F)} \G)
  \end{split}
\end{equation*}
After reindexing  ${}_P
E_1^{pq} = {}_LE_2^{2p+q,-p}$, the first
spectral sequence  can be identified with the Leray spectral sequence
$${}_LE_2^{pq}=H^p(S, H_S^q(X,Y;R) )\Rightarrow H^{p+q}(X,Y;R)$$
c.f. \cite[1.4.8]{deligne-hodge}.
A similar change of variable  leads
to an isomorphism $ {}_{Dec(F)}E_1^{pq}={}_FE_{2}^{2p+q,-p}$ \cite[1.3.3.2]{deligne-hodge}. The
first page  of the last spectral sequence is
 $${}_FE_1^{pq} = H^{p+q}(Gr_F^p\G)\cong H^{p+q}(  X_{  T_p},  Y_{  T_p}\cup   X_{  T_{p-1}}) $$
where the differentials are connecting maps. The whole spectral sequence can be constructed, in the usual manner \cite[\S 5.9]{weibel},
 from  the exact couple 
 \begin{equation}
   \label{eq:excouple}
   \begin{split}
     E_1&= \bigoplus_p H^*(X_{  T_p},  Y_{  T_p}\cup   X_{  T_{p-1}})\\
D_1&= \bigoplus_p H^*(X,  Y\cup   X_{  T_{p-1}}) 
   \end{split}
 \end{equation}
with maps
$$
\xymatrix{ D_1 \ar[rr] & & D_1\ar[ld]\\  &E_1\ar[lu] &}
$$
coming from the long exact sequence associated to the triples  $(X,Y\cup X_{T_p}, Y\cup X_{T_{p-1}})$.
For each $i$, let us write $\calK(i)^\dt=\calK(X,Y,i)^\dt = E_1^{i\dt}$, i.e.
\begin{equation}\label{eq:Ki}
\calK(i)^\dt= H^{i}( X_{ T_0}, Y_{ T_0}\cup X_{ T_{0-1}})
 \to H^{i+1}(  X_{  T_1},  Y_{  T_1}\cup   X_{  T_{1-1}})\to \ldots  
\end{equation}

To summarize:

\begin{cor}\label{cor:leray}
With the same assumptions as above,
  there is an isomorphism of spectral sequences ${}_LE_2^{pq}\cong
  {}_FE_{2}^{pq}$.
In particular, there is an isomorphism of $R$-modules
$$
  \phi: \calH^j(\calK(i)^\dt)\cong H^j(S, H_S^i(X,Y))
 $$
where $\calH^j$ stands for the $j$th cohomology module of a complex.
\end{cor}
 
We need to understand the naturality  properties of above the isomorphism.
Given a morphism of $(X'\to S,Y')\to
(X\to S,Y)$ of pairs, we have a morphism $\G_{(X,Y)}\to \G_{(X', Y')}$
compatible with the filtrations $P,F, Dec(F)$ and the isomorphism  of lemma
\ref{lemma:leray} (which is just the identity!).
In particular, we can
conclude that we have a morphism
$$\calK(X,Y,i)^\dt\to \calK(X',Y',i)^\dt$$
and a commutative diagram
\begin{equation}\label{eq:phi1}
\xymatrix{
 \calH^j(\calK(X,Y,i)^\dt)\ar[d]\ar[r]^{\phi} & H^j(S, H_S^i(X,Y))\ar[d] \\ 
 \calH^j(\calK(X',Y',i)^\dt)\ar[r]^{\phi} & H^j(S, H_S^i(X',Y'))
}
\end{equation}

Let $Z\subseteq Y\subseteq X$ be a chain of closed sets. Then 
the exact sequence
$$0\to j_{XY!} R \to j_{XZ!} R \to j_{YZ!} R\to 0$$
gives rise to a distinguished triangle
$$\G_{(X,Y)}\to \G_{(X,Z)}\to \G_{(Y,Z)}\to \G_{(X,Y)}[1]$$
The last morphism is compatible with the filtrations leading to 
a commutative diagram
\begin{equation}\label{eq:phi2}
\xymatrix{
 \calH^j(\calK(Y,Z,i)^\dt)\ar[d]\ar[r]^{\phi} & H^j(S, H_S^i(Y,Z))\ar[d] \\ 
 \calH^j(\calK(X,Y,i+1)^\dt)\ar[r]^{\phi} & H^j(S, H_S^{i+1}(X,Y))
}
\end{equation}
 
  \section{Motivic Leray}

Fix a subfield $k\subset \C$, and commutative noetherian ring $R$. Let
$\cM(S)=\cM(S,R)$ for the rest of this section. Here is the main result of the paper. It refines
theorem 3.1 of \cite{arapuraL}, although the strategy of proof is
closer to that of  \cite[thm 5.2.1]{arapura}.

 \begin{thm}\label{thm:leray}
   Let $S$ be a  quasiprojective $k$-variety. Then
   there exists a $\delta$-functor
   $\{h^j :\cM(S)\to \cM(k)\}_{j=0,1\ldots}$, such that for each $j$, the diagram
   $$
\xymatrix{
 \cM(S)\ar[r]^{h^j}\ar[d]^{R_B} & \cM(k)\ar[d]^{R_B} \\ 
 \Cons(S_{an})\ar[r]^{H^j} & R\text{-mod}
}
   $$
   $2$-commutes. Given a controlled pair $(f:X\to S, Y)$, there exists
a spectral sequence
$${}_ME_2^{pq} = h^p(h_S^q(X,Y))\Rightarrow h^{p+q}(X,Y)$$
in $\cM(k)$
whose image under $R_B$ is isomorphic to the Leray spectral sequence
$${}_L E_2^{pq} = H^p(S, H_S^q(X,Y))\Rightarrow H^{p+q}(X,Y)$$

 \end{thm}

We will defer the proof until we have established some preliminary results.
Define  a new category $C$ whose objects are triples
$$(K^\dt, \F, \phi: H^*(S,\F)\cong R_B\circ \cH^*(K^\dt))$$
where $K^\dt\in \Ob C^b(\M(k))$, $\F\in \Cons(S,R)$, and
$\phi$ an isomorphism of graded $R$-modules. A morphism
$(K_1^\dt, \F_1,\phi_1)\to (K_2^\dt,\F_2, \phi_2)$ is  pair of morphisms $K_1^\dt\to K_2^\dt$, $\F_1\to \F_2$
which are compatible under $\phi_i$.

\begin{lemma}
 The category $C$ is   abelian, and the projections $p_1:C\to C^b(\M(k))$ and $p_2:C\to \Cons(S)$ are exact.
 The induced functor from $\overline{C}= C/\ker p_2$ to $\Cons(S)$ is exact and faithful.
 The functor $\cH^i\circ p_1:C\to \M(k)$ factors through the quotient $\overline{C}$.

\end{lemma}

\begin{proof}
 The first two statements are  straightforward and completely formal, so let us focus on the last. Let $\Sigma$ be the set of morphisms of $C$ whose
  kernel and cokernel lie in $\ker p_2$. Then by construction $\overline{C}$ is the localization $\Sigma^{-1}C$. So it suffices
 to prove that $\cH^i\circ p_1$ takes $\Sigma$ to the set of isomorphisms.
 Let $f:(K_1^\dt, \F_1,\phi_1)\to (K_2^\dt,\F_2, \phi_2)$ be in $\Sigma$. Then $f$ induces an isomorphism $\F_1\cong \F_2$,
 and therefore an isomorphism $H^i(\F_1)\cong H^i(\F_2)$. It follows that $f$ induces an isomorphism $\cH^i(K_1)\cong \cH^i(K_2)$.
\end{proof}

\begin{prop}
There is a  $\delta$ functor $h^*:\M(S)\to \M(\Spec k)$ such that
 $$
\xymatrix{
 \M(S)\ar[r]^{h^j}\ar[d]^{R_B} & \M(k)\ar[d]^{R_B} \\ 
 \Cons(S_{an})\ar[r]^{H^j} & R\text{-mod}
}
   $$
   $2$-commutes. 
\end{prop}

 \begin{proof}
    By lemma~\ref{lemma:jou}, we can find an affine
   variety $T$ and an affine space bundle $p:T\to S$. We fix this
   choice. By corollary \ref{cor:adapt},  $\Delta(S)$ is a directed union of
 $\Delta(S, T_\dt)$. 
 Therefore by   lemma~\ref{lemma:dlim},  $\M(S)$ is the filtered $2$-colimit of
  the categories
 $$\M(S, T_\dt) = \A(H|_{\Delta(S,T_\dt)})$$
Thus it suffices to define $h^j$ on these categories, and verify
compatibility under refinement.

Given an object $(X,Y, i)\in \Ob\Delta(S, T_\dt)$, let
$K_{T_\dt}(X,Y,i)$ denote the sequence of motives in $\M(k)$ given by
$$ h^{i}( X_{ T_0}, Y_{ T_0}\cup X_{ T_{0-1}})
 \stackrel{d}{\to} h^{i+1}(  X_{  T_1},  Y_{  T_1}\cup   X_{
   T_{1-1}}) \stackrel{d}{\to}  \ldots 
 $$
where the maps $d$ are connecting maps.  One can check immediately that
$R_B(d^2)=0$, so $d^2=0$ because $R_B$ is faithful. Therefore $K_{T_\dt}(X,Y,i)$ is an object in the abelian
category of bounded chain complexes $C^b(\M(k))$. Its image
$R_B(K(X,Y,i))\in C^b(R\text{-mod})$ is the complex $\calK_i^\dt$ constructed in
\eqref{eq:Ki}. We define 
$$F_{T_\dt}(X,Y,i)= (K_{T^\dt}(X,Y,i), H^i_S(X,Y), \phi )\in \Ob C$$
where $\phi$ comes from corollary \ref{cor:leray}.

We claim that $F_{T_\dt}:\Delta(S,T_\dt)^{op}\to C$ is a representation.
%We check this for the first entry $K_{T_\dt}$.
Given a morphism $(X', Y', i)\to (X,Y,i)$ of type I, one gets a diagram
$$
\xymatrix{
  h^{i}( X_{ T_0}, Y_{ T_0}\cup X_{ T_{0-1}})
\ar[r]\ar[d] & h^{i+1}(  X_{  T_1},  Y_{  T_1}\cup   X_{
   T_{1-1}})\ar[r]\ar[d] &  \\ 
  h^{i}( X_{ T_0}', Y_{ T_0}'\cup X_{ T_{0-1}}')\ar[r] & h^{i+1}(  X_{  T_1}',  Y_{  T_1}'\cup   X_{
   T_{1-1}}')\ar[r] & 
}
$$
It commutes because it does so after applying $R_B$. Therefore we have 
morphism $K_{T_\dt}(X,Y,i)\to K_{T_\dt}(X', Y',i)$. This can be
completed to a morphism  $F_{T_\dt}(X,Y,i)\to F_{T_\dt}(X', Y',i)$ by
\eqref{eq:phi1}.
%where the second entry is the canonical map $H^i_S(X,Y)\to H^i_S(X',Y')$. 

Similarly, given a morphism of type II associated to a triple
$Z\subseteq Y\subseteq X$, one gets a commutative diagram
$$
\xymatrix{
  h^{i}( X_{ T_0}, Y_{ T_0}\cup X_{ T_{0-1}})
\ar[r]\ar[d] & h^{i+1}(  X_{  T_1},  Y_{  T_1}\cup   X_{
   T_{1-1}})\ar[r]\ar[d] &  \\ 
  h^{i+1}( Y_{ T_0}, Z_{ T_0}\cup Y_{ T_{0-1}})\ar[r] & h^{i+2}(  Y_{  T_1},  Z_{  T_1}\cup   Y_{
   T_{1-1}})\ar[r] & 
}
$$
This can be extended to  a morphism
$$(K_{T_\dt}(X,Y,i), H_S^i(X,Y), \phi) \to (K_{T_\dt}(Y,Z,i+1), H_S^{i+1}(Y,Z), \phi) $$
using \eqref{eq:phi2}.
Thus $F_{T_\dt}$ is a representation as claimed. 

Let $\bar F_{T_\dt}:\Delta(S,T_\dt)^{op}\to \overline{C}$ be the compostion of $ F_{T_\dt}$ with the quotient map.
Since $H$ factors through $\bar F_{T_\dt}$, by theorem \ref{thm:M}, it  extends
to an exact functor
$$\bar F_{T_\dt}:\M(S,T_\dt)\to \overline{C}$$
Let $h_{T_\dt}^j$ denote the composite
\begin{equation}
  \label{eq:KS}
\M(S,T_\dt)\stackrel{\bar F_{T_\dt}}{\longrightarrow} \overline{C} \stackrel{\cH^j\circ p_1}{\longrightarrow}\M(k)
\end{equation}
where the second arrow comes from the previous lemma.
 This forms a $\delta$-functor, since $\cH^j\circ p_1$ does.

As noted already, by corollary~\ref{cor:leray}
\begin{equation}\label{eq:HjK}
  R_B(h_{T_\dt}^j(h^i_S(X,Y)))\cong \cH^j(R_B(K_{T_\dt}(X,Y,i)) \cong H^j(S, H^i_S(X,Y))
\end{equation}
If $T_\dt'\subseteq T_\dt$, then one has a map of quivers $\Delta(S,T_\dt)\to \Delta(S,T_\dt')$.
and a corresponding  map of complexes 
\begin{equation}\label{eq:KS}
K_{T_\dt}(X,Y,i)\to K_{T_\dt'}(X,Y,i)
\end{equation}
This is a quasi-isomorphism by \eqref{eq:HjK}.
Therefore $h_{T_\dt}^j$ is compatible with refinement, so it
extends to a functor $h^j$ on the $2$-colimit $\M(S)$.

 \end{proof}

\begin{proof}[Proof of theorem \ref{thm:leray}]
We first  prove that the functor $h^j$ constructed in the last
proposition has an extension  $\cM(S)\to \cM(k)$ with the same properties.
Given a complex $K^\dt\in C^b(\M(k))$, observe that
$$R_B(\LLe K^\dt)\cong H^1(\Gm,1)\otimes_R R_B(K^\dt)\cong R_B(K^\dt)$$
Define $\LLe: C\to C$ by
$$\LLe( K^\dt, \F, \phi: H^*(S,\F)\cong R_B\circ \cH^*(K^\dt)) )= (\LLe K, \F, H^*(S,\F)\cong R_B\circ \cH^*(\LLe K^\dt)) )$$
The composite $\LLe:C\to \overline{C}$ factors through $\overline{C}$.

Let $\lambda:\Ob \Delta(S)\to \Ob\Delta(S)$ be given by
$$(X\to S, Y,i)\mapsto  (\Gm\times X\to S, \Gm\times Y\cup 1\times X, i+1)$$
Then
$$K(\lambda(X,Y,i))\cong \LLe K(X,Y,i)$$
One can check that the diagram
$$
\xymatrix{
 \Delta(S,T_\dt)^{op}\ar[r]^{\bar F}\ar[d]\ar[rd]^{\lambda^{op}} & \overline{C}\ar[rd]^{\LLe} &  \\ 
 \M(S,T_\dt)\ar[rd]^{\LLe}\ar[ru] & \Delta(S,T_\dt)^{op}\ar[r]^{\bar F}\ar[d] & \overline{C} \\ 
  & \M(S,T_\dt)\ar[ru] & 
}
$$
$2$-commutes. This implies that  $\bar F:\M(S,T_\dt)\to \overline{C}$ extends to a functor
$\cM(S,T_\dt)\to \overline{C}$.  Composing with $\calH^j\circ p_1$, and passing to the colimit, 
gives  an extension $h^j:\cM(S)\to \cM(k)$ such that $\LL\circ
h^j\cong h^j\circ \LL$.
Any object of $M\in \Ob \cM(S)$ is isomorphic to $\LL^n M'$ with $M'\in \Ob
\M(S)$. Therefore
$$R_B(h^j( M))\cong R_B(\LL^n h^j(M')) \cong  R_B(h^j(M'))\cong H^j(R_B(M))$$

Define an exact
couple in $\cM(k)$ by
$${}_ME_1= \bigoplus h^*(X_{  T_p},  Y_{  T_p}\cup   X_{  T_{p-1}}) $$
$${}_MD_1= \bigoplus h^*(X,  Y\cup   X_{  T_{p-1}}) $$
with maps
$$
\xymatrix{ D_1 \ar[rr] & & D_1\ar[ld]\\  &E_1\ar[lu] &}
$$
induced by connecting maps as in \eqref{eq:excouple}. This generates a
spectral sequence ${}_ME_1^{pq}$.
Then the image
of this exact couple under $R_B$ is \eqref{eq:excouple}.  Therefore
$R_B({}_ME_1^{pq}) \cong {}_FE_1^{pq}$. Therefore by corollary $R_B({}_ME_2^{pq}) \cong {}_LE_2^{pq}$.

\end{proof}

The proof actually gives a bit more than what was stated.

\begin{cor}[of proof]
  With the same assumptions as in the theorem,
 there is a well defined triangulated
functor $r\Gamma:D^b\cM(S)\to D^b \cM(k)$, such  $h^j M = \cH^j(
r\Gamma M)$ for any $M\in \cM(S)$.
\end{cor}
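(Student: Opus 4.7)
The plan is to observe that the proof of Theorem~\ref{thm:image} already produces, for each filtration $S_\dt$ (resp.\ $T_\dt$ in the projective case), an exact functor $\mathcal{K}_{S_\dt}:\M(S,S_\dt)\to C^b(\M(Q))$ from which $r^j g_*$ is defined as $\cH^j\circ \mathcal{K}_{S_\dt}$ before being passed to the $2$-colimit of Lemma~\ref{lemma:dlim}. To upgrade $r^j g_*$ to a triangulated functor on bounded derived categories, I would derive each $\mathcal{K}_{S_\dt}$, check compatibility under refinement inside $D^b\M(Q)$, assemble via the $2$-colimit, and finally invert $\LL$.

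First I would extend $\mathcal{K}_{S_\dt}$ to $C^b\M(S,S_\dt)\to C^b\M(Q)$ by applying it degreewise to a bounded complex and totalizing the resulting bounded bicomplex. Because $\mathcal{K}_{S_\dt}$ is exact, the totalization preserves quasi-isomorphisms (by the standard bounded bicomplex spectral sequence argument) and sends short exact sequences to distinguished triangles, so it descends to a triangulated functor $D\mathcal{K}_{S_\dt}:D^b\M(S,S_\dt)\to D^b\M(Q)$. For $M\in \M(S,S_\dt)$ viewed in degree zero the output is just $\mathcal{K}_{S_\dt}(M)$, so $\cH^j(D\mathcal{K}_{S_\dt}(M))=r^j g_*(M)$ holds on the nose by construction.

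Next I would verify that for a refinement $S_\dt'\subseteq S_\dt$, the comparison map of complexes $K_{S_\dt}(X,Y,i)\to K_{S_\dt'}(X,Y,i)$ produced in the proof is already a quasi-isomorphism in $\M(Q)$. After $R_B$ both sides compute $H_Q^*(S,H_S^i(X,Y))$, and since $R_B$ is faithful and exact it reflects quasi-isomorphisms of bounded complexes. Consequently the $D\mathcal{K}_{S_\dt}$ are compatible up to natural isomorphism across refinements, so they assemble into a triangulated functor $rg_*:D^b\M(S)\to D^b\M(Q)$. To descend to $\cM$, the K\"unneth isomorphism \eqref{eq:kunneth} yields a termwise identification $\LL\circ \mathcal{K}_{S_\dt}\cong \mathcal{K}_{S_\dt}\circ \LL$, hence $\LL\circ rg_*\cong rg_*\circ \LL$ in $D^b$; the universal property of $\cM(-)=\M(-)[\LL^{-1}]$ then promotes $rg_*$ to a triangulated functor $D^b\cM(S)\to D^b\cM(Q)$, and the formula $\cH^j(rg_*M)=r^j g_* M$ extends to $\cM$ by the same compatibility.

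The main obstacle I anticipate is justifying the identification $D^b\M(S)\cong \text{2-}\varinjlim D^b\M(S,S_\dt)$: one needs that any bounded complex and any roof representing a morphism in $D^b\M(S)$ can be realized in a single $\M(S,S_\dt)$ after a common refinement. This reduces to the fact that any finite diagram in $\M(S)=\text{2-}\varinjlim \M(S,S_\dt)$ already lies in some $\M(S,S_\dt)$, which is the essence of Lemma~\ref{lemma:dlim}. The remaining steps are routine given the exactness of $\mathcal{K}_{S_\dt}$, the faithfulness of $R_B$, and the universal property of the localization at $\LL$ established earlier in the paper.
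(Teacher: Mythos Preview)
Your proposal is correct and follows essentially the same strategy as the paper: extend $\mathcal{K}_{S_\dt}$ degreewise to bounded complexes, totalize, use the refinement quasi-isomorphism (detected via $R_B$ and \eqref{eq:HjK}) to pass to the colimit, and then invert $\LL$.

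There is one minor but instructive difference in the order of operations. You localize first to obtain $D\mathcal{K}_{S_\dt}:D^b\M(S,S_\dt)\to D^b\M(Q)$ and then take the $2$-colimit of the derived categories, which is why you must confront the identification $D^b\M(S)\simeq \text{2-}\varinjlim D^b\M(S,S_\dt)$ you flag as an obstacle. The paper instead takes the colimit at the chain-complex level, obtaining $C^b\M(S)\to D^b\M(Q)$ directly from the compatible family $C^b\M(S,S_\dt)\to D^b\M(Q)$ (where the colimit $C^b\M(S)=\text{2-}\varinjlim C^b\M(S,S_\dt)$ is immediate from Lemma~\ref{lemma:dlim}), and only afterwards checks that this functor inverts quasi-isomorphisms and hence factors through $D^b\M(S)$. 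This order sidesteps the obstacle you anticipated, though your resolution of it is also fine.
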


\begin{proof}
The functor $p_2\circ \bar F_{T_\dt}$ extends to
  an exact functor
  $$C^b\M(S,T_\dt)\to C^b(C^b(\M(k)))$$
from the category of single complexes to double complexes.  Composing
with the  total
complex, and projection, yields a functor
$$C^b\M(S,T_\dt)\to D^b\M(k)$$ 
The map \eqref{eq:KS} is a quasi-isomorphism by
\eqref{eq:HjK}. Therefore the above map passes to the  $2$-colimit
$$C^b\M(S)\to D^b\M(k)$$ 
This factors through $D^b\M(S)$, and
satisfies $h^jM = \cH^j(
r\Gamma M)$. One can check that this commutes with $\LLe$, therefore extends to $r\Gamma:D^b\cM(S)\to D^b \cM(k)$.
\end{proof}

\begin{thm}
Let $X$ be a
  smooth and projective variety. Let $f:X\to
  S$ be a surjective projective morphism to another variety $S$, and
  assume   that either $f$ is smooth and projective, or that $S$ is a smooth
  projective curve. Then there is a noncanonical decomposition
$$h^i(X) \cong \bigoplus_{p+q=i} h^q(h^p_S(X))$$ 
in $\cM(k,\Q)$.
\end{thm}

\begin{proof}
   The Leray spectral sequence
$${}_LE_2 = H^p(S, H^q_S(X;\Q)) \Rightarrow H^{p+q}(X;\Q)$$
degenerates at $E_2$, either by Deligne \cite[thm 1.5]{deligneL} when $f$ is
smooth and projective, or by Zucker \cite[cor 15.15]{zucker} when $S$ is a curve.
This means that the differentials $d_2, d_3,\ldots$ are all
zero. Therefore the same holds for the  spectral sequence ${}_ME_2$, constructed in the
previous theorem. It follows that there is a filtration $L^p$ on
$h^i(X)$ such that
$$Gr_L^p h^i(X)=  h^q(h^p_S(X))$$ 
Since $h^i(X)$ lies in $\cM^{pure}(k)$, and this category is
semisimple  by corollary \ref{cor:andre},
 it follows that the maps
$$h^i(X)\leftarrow L^p h^i(X)\to Gr^p_L h^i(X)$$
split.  So we obtain an isomorphism
$$h^i(X) \cong \bigoplus_{p+q=i} h^q(h^p_S(X))$$ 
\end{proof}

 %%%%

\end{document}